\documentclass[11pt]{amsart}
\usepackage{a4wide}
\usepackage{booktabs}
\usepackage{url}

\newtheorem{thm}{Theorem}[section]
\newtheorem{cor}[thm]{Corollary}

\numberwithin{equation}{section}

\renewcommand{\)}{\rangle}
\newcommand{\para}{$\wp$}
\DeclareMathOperator{\Sym}{Sym}

\newcommand{\Magma}{\textsc{Magma}}

\renewcommand\today{\number\day\space\ifcase\month\or
  January\or February\or March\or April\or May\or June\or
  July\or August\or September\or October\or November\or December\fi
  \space\number\year}

\author{D. E. Taylor}
\title[Reflection subgroups of primitive reflection groups]%
{Simple extensions of reflection subgroups of primitive complex reflection groups}
\address{School of Mathematics and Statistics\\
The University of Sydney\\
Australia 2006}
\email{donald.taylor@sydney.edu.au}
\date{\today}

\begin{document}
\begin{abstract}
If $G$ is a finite primitive complex reflection group, all reflection
subgroups of $G$ and their inclusions are determined up to conjugacy. As a
consequence, it is shown that if the rank of $G$ is $n$ and if $G$ can be
generated by $n$ reflections, then for every set $R$ of $n$ reflections which
generate $G$, every subset of $R$ generates a parabolic subgroup of $G$.
\end{abstract}

\subjclass{20F55}
\keywords{complex reflection group; parabolic subgroup, reflection subgroup}

\maketitle

%% ====================================================================
\section{Introduction}
The finite irreducible complex reflection groups were classified by Shephard
and Todd \cite{shephard-todd:1954} in 1954. If $G$ is a primitive complex
reflection group then, as shown by Shephard and Todd, $G$ is either cyclic, a
symmetric group $\Sym(n)$ for $n\ge 5$, or one of 34 groups $G_k$, where $4
\le k\le 37$.

A \emph{reflection subgroup} of $G$ is a subgroup generated by reflections. A
\emph{parabolic subgroup} is the pointwise stabiliser of a subset $X$ of $V$.
By a fundamental theorem of Steinberg \cite{steinberg:1964} (see also
\cite[Theorem 9.44]{lehrer-taylor:2009}) a parabolic subgroup is a reflection
subgroup.

If $H$ is a reflection subgroup of $G$, the \emph{simple extensions} of $H$
are the subgroups $\(H,r\)$, where $r$ is a reflection and $r\notin H$.

If $\mathcal H$ is a conjugacy class of reflection subgroups of $G$, a
conjugacy class $\mathcal K$ is a \emph{simple extension} of $\mathcal H$ if
there exists $H\in\mathcal H$ and $K\in \mathcal K$ such that $K$ is a simple
extension of $H$.

All simple extensions of the conjugacy classes of reflection subgroups of the
imprimitive complex reflection groups $G(m,p,n)$ were determined in
\cite{taylor:2011}. The purpose of the present paper is to extend this result
to all finite complex reflection groups by describing the simple extensions
of the conjugacy classes of reflection subgroups of the groups $G_k$ ($23\le
k \le 37$).  For the groups $G_k$ of rank 2 ($4 \le k \le 22$) every element
is a reflection modulo scalars and the simple extensions can be deduced from
the results of \cite[Chapter 6]{lehrer-taylor:2009}.

The results are presented in Section \ref{sec:tab} in the form of tables. The
tables themselves were computed with the aid of the computational algebra
system \Magma\ \cite{magma:1997}.  Tables of conjugacy classes of the
reflection subgroups of the Coxeter groups of types $E_6$, $E_7$, $E_8$,
$F_4$, $H_3$ and $H_4$ can be found in \cite{douglass-etal:2011}.  Tables of
conjugacy classes of parabolic subgroups of these groups also appear in
\cite[Appendix A]{geck-pfeiffer:2000}.

Refer to \cite{lehrer-taylor:2009} and \cite{taylor:2011} for background and
terminology not otherwise explained here.

\section{Notation}
In his thesis Cohen \cite{cohen:1976} introduced a notation for primitive
complex reflection groups of rank at least 3 which extends the standard
(Cartan) notation for Coxeter groups. In this notation the complex reflection
groups which are not Coxeter groups are labelled $J_3^{(4)}$, $J_3^{(5)}$,
$K_5$, $K_6$, $L_3$, $L_4$, $M_3$, $N_4$ and $EN_4$.  In the tables which
follow we shall label the conjugacy classes of reflection subgroups using
this notation except that, as in \cite{lehrer-taylor:2009}, we use $O_4$
instead of $EN_4$.

A reflection subgroup which is the direct product of irreducible reflection
groups of types $T_1$, $T_2$,\dots,~$T_k$ will be labelled
$T_1+T_2+\cdots+T_k$ and if $T_i = T$ for all $i$ we denote the group by
$kT$.

For the imprimitive reflection subgroups which occur in the tables we use the
notation introduced in \cite[section 7.5]{lehrer-taylor:2009} rather than the
Shephard and Todd notation $G(m,p,n)$.  That is, $B_n^{(2p)}$ denotes the
group $G(2p,p,n)$ and $D_n^{(p)}$ denotes the group $G(p,p,n)$. For
consistency with the Cartan names we write $B_n$ instead of $B_n^{(2)}$ and
$D_n$ instead of $D_n^{(2)}$.  Similarly $A_{n-1}$ denotes the symmetric
group $\Sym(n)\simeq G(1,1,n)$.  However, we use $D_2^{(m)}$ rather than
$I_2(m)$ to denote the dihedral group of order $2m$.

For small values of the parameters there are isomorphisms between the groups:
$B_2\simeq D_2^{(4)}$, $A_2\simeq D_2^{(3)}$, $A_3\simeq D_3$ and $B_2 \simeq
D_2^{(4)}$.  The tables use the first named symbol for these groups. The
cyclic groups of order $2$ and $3$ are denoted by $A_1$ and $L_1$
respectively, and $L_2$ denotes the Shephard and Todd group~$G_4$.

If there is more than one conjugacy class of reflection subgroups of type $T$
we label the conjugacy classes $T.1$, $T.2$, and so on. There is no
significance to the order in which these indices occur.

Given a reflection subgroup $H$, the \emph{parabolic closure} of $H$ is the
pointwise stabiliser of the space of fixed points of $H$; it is the smallest
parabolic subgroup which contains $H$. The rank of the parabolic closure
equals is equal to the rank of $H$. For those (conjugacy classes of)
reflection subgroups $H$ whose parabolic closure is a simple extension of $H$
we place the parabolic closure first in the list of simple extensions and use
a bold font.

The conjugacy classes of parabolic subgroups are labelled with the symbol
\para.

\section{Main theorem}
\begin{thm}\label{thm:extpara}
Suppose that $H$ is a reflection subgroup of the finite primitive complex
reflection group $G$ and suppose that $K$ is a simple extension of $H$. If
$K$ is parabolic and the rank of $K$ is greater than the rank of $H$, then
$H$ is parabolic.
\end{thm}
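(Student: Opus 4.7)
\medskip
\noindent\textbf{Proof plan.}
The plan is to reduce the statement to an intrinsic question inside $K$ and then settle it from the classification. Write $\hat H = G_{V^H}$ for the parabolic closure of $H$, so that $H\le\hat H$ and $\mathrm{rank}(\hat H)=\mathrm{rank}(H)$. First I will show $\hat H\le K$ and $\mathrm{rank}(K)=\mathrm{rank}(H)+1$. From $H\le K$ one has $V^K\subseteq V^H$, and because $K$ is parabolic, $K=G_{V^K}$; hence every element fixing $V^H$ pointwise also fixes $V^K$ pointwise, so $\hat H\le K$. If $r\in\hat H$ then $K=\langle H,r\rangle\le\hat H$, forcing $\mathrm{rank}(K)=\mathrm{rank}(H)$, contrary to hypothesis; so $r\notin\hat H$. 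Adjoining $r$ to $\hat H$ raises its rank by exactly one, and since $\langle H,r\rangle\le\langle\hat H,r\rangle\le K$, it follows that $K=\langle\hat H,r\rangle$ and $\mathrm{rank}(K)=\mathrm{rank}(H)+1$.

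The second step localises the problem to $K$. The parabolic subgroups of $G$ contained in $K$ coincide with the parabolic subgroups of $K$ regarded as a reflection group in its own right: the identity $K=G_{V^K}$ forces every $G_Y\le K$ to satisfy $V^K\subseteq V^{G_Y}$ and hence $G_Y = K_{V^{G_Y}}$, while conversely $K_U = G_{V^K}\cap G_U = G_{V^K\cup U}$ is parabolic in $G$. So the theorem reduces to the claim $H=\hat H$. Because every reflection subgroup of a direct product of reflection groups is a direct product of reflection subgroups and the statement is additive over factors, I may assume $K$ is an irreducible complex reflection group of rank $\mathrm{rank}(H)+1$ and ask: if $H\le K$ is a reflection subgroup of corank one and $K$ is a simple extension of $H$, must $H$ be the pointwise stabiliser in $K$ of $V^H$?

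The final step is a classification-based verification of this intrinsic question. For $K=G(m,p,n)$ it is contained in the tables of \cite{taylor:2011}. For $K$ primitive of rank~$2$ (one of $G_4,\dots,G_{22}$) it follows from \cite[Chapter~6]{lehrer-taylor:2009}, as noted in the introduction. For $K$ among the primitive groups $G_{23},\dots,G_{37}$ it is read off from the tables of Section~\ref{sec:tab}: by the conventions fixed there (parabolic classes tagged with \para, and parabolic closures printed in bold when they occur as simple extensions), every row whose list of simple extensions contains a parabolic class of strictly larger rank is itself marked parabolic. The main obstacle is precisely this last verification; it is combinatorial, driven by the \Magma{}-generated tables, and I do not see a uniform conceptual argument that would obviate the case analysis.
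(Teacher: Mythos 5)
Your argument is correct, but it takes a genuinely different route from the paper's. You first prove structural reduction lemmas: any parabolic simple extension $K$ of $H$ of strictly larger rank contains the parabolic closure $\hat H=G_{V^H}$, satisfies $\mathrm{rank}(K)=\mathrm{rank}(H)+1$ and equals $\langle \hat H,r\rangle$; parabolicity in $G$ localises to parabolicity in the parabolic subgroup $K$; and reflection subgroups of products split as products. This reduces everything to one intrinsic question about an irreducible group $K$: must a corank-one reflection subgroup generating $K$ together with a single reflection be parabolic in $K$? The paper does none of this; its proof is a direct case division on the primitive group $G$ itself: $\Sym(n)$, where every reflection subgroup is parabolic by \cite[Corollary 3.9]{taylor:2011}; rank $1$ trivially; rank $2$ by the ad hoc observation that a non-parabolic rank-one subgroup is generated by the square of an order-$4$ reflection, which pins the candidates down to $G_8$ and $G_9$ and excludes them via \cite[Section 6.3]{lehrer-taylor:2009}; and rank at least $3$ by inspecting the tables of Section~\ref{sec:tab}. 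The trade-off is real in both directions. Your reduction shrinks the table inspection to the corank-one rows whose extension list contains the full group, and it actually proves the theorem for an arbitrary finite complex reflection group $G$, not just a primitive one; but because parabolic subgroups of the primitive groups have imprimitive irreducible factors (for instance $B_2^{(3)}$ in $M_3$, $D_3^{(4)}$ in $N_4$, $B_3^{(4)}$ in $O_4$), you are forced to invoke the complete imprimitive analysis of \cite{taylor:2011}, a dependence the paper's proof of this theorem avoids. Two points to tighten: the claim that adjoining $r$ to $\hat H$ raises the rank by exactly one uses that $\hat H$ is parabolic---$r\notin\hat H=G_{V^{\hat H}}$ forces $V^{\hat H}\not\subseteq\ker(r-1)$, whereas for a non-parabolic subgroup the rank can stay fixed (adjoin an order-$4$ reflection to the group generated by its square)---so this should be said explicitly; and when $K$ itself has rank $2$, the blanket appeal to \cite[Chapter 6]{lehrer-taylor:2009} should be expanded to the explicit exclusion of $G_8$ and $G_9$, since that is the only delicate rank-$2$ case and is exactly where the paper spends its effort.
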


\begin{proof}
If $G$ is the symmetric group $\Sym(n)$, it is well known---see
\cite[Corollary 3.9]{taylor:2011} for a proof---that every reflection
subgroup of $G$ is parabolic.  Thus in this case there is nothing to prove.
If the rank of $G$ is 1, the only parabolic subgroup is $G$ itself and so we
may suppose that $G = G_k$ for some $k$ such that $4\le k \le 37$.

If the rank of $G$ is 2 and $H$ is a non-parabolic reflection subgroup of
rank 1, then from \cite[Table D.1]{lehrer-taylor:2009} $G$ contains an
element of order 4 whose square generates $H$.  If $H$ has a simple extension
of rank 2 which is parabolic, then the simple extension is $G$ and so $G$ is
generated by two reflections.  The only possibilities for $G$ are $G_8$ and
$G_9$ but from \cite[Section 6.3]{lehrer-taylor:2009} neither group can be
generated by two reflections one of which is the square of a reflection of
order~4.

If the rank of $G$ is at least 3, the result follows from an inspection of
the tables in Section~\ref{sec:tab}.
\end{proof}

\begin{cor}\label{thm:prim}
If $G$ is a primitive reflection group of rank $n$ and if $R$ is a set of $n$
reflections which generate $G$, then for any subset $S$ of $R$, the subgroup
generated by $S$ is a parabolic subgroup of $G$.
\end{cor}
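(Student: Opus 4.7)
The plan is to prove the corollary by downward induction on $|S|$, with Theorem~\ref{thm:extpara} doing essentially all the work. The base case $|S|=n$ is immediate, since then $S=R$ and $\langle S\rangle=G$ is tautologically parabolic. For the inductive step, given $S\subsetneq R$ I would pick any $r\in R\setminus S$ and set $T=S\cup\{r\}$; the inductive hypothesis delivers that $\langle T\rangle$ is parabolic, and I would apply Theorem~\ref{thm:extpara} to the pair $H=\langle S\rangle$, $K=\langle T\rangle$ to conclude that $\langle S\rangle$ itself is parabolic.

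To run this argument I need to know that $\langle T\rangle$ is genuinely a simple extension of $\langle S\rangle$ of strictly greater rank. The key observation is that the $n$ reflecting hyperplanes attached to the elements of $R$ intersect in the fixed space of $\langle R\rangle=G$, and because $G$ has rank $n$ on its $n$-dimensional reflection representation this intersection is $\{0\}$. Equivalently, the root vectors of the reflections in $R$ are linearly independent. Restricting to any $S\subseteq R$, the fixed space of $\langle S\rangle$ therefore has codimension exactly $|S|$, so the rank of $\langle S\rangle$ equals $|S|$. In particular, every $r\in R\setminus S$ lies outside $\langle S\rangle$, and the rank strictly increases from $\langle S\rangle$ to $\langle T\rangle$, which is precisely the hypothesis of Theorem~\ref{thm:extpara}.

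I do not anticipate a real obstacle: the only substantive ingredient beyond Theorem~\ref{thm:extpara} is the linear-independence observation above, and no separate case analysis is needed, since the symmetric-group and rank-$\le 2$ situations are already absorbed into the proof of the main theorem. If anything is delicate, it is making sure one reads the definition of rank correctly so that ``$|R|=n$ reflections generating a rank-$n$ group'' really does force the reflecting hyperplanes into general position; once that is noted the induction is mechanical.
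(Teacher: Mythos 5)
Your proof is correct and is essentially the paper's (implicit) argument: the paper states the corollary without a separate proof, and the intended deduction from Theorem~\ref{thm:extpara} is exactly your downward induction, with the observation that $\langle R\rangle=G$ having rank $n$ forces the $n$ reflecting hyperplanes into general position (linearly independent coroot functionals), so that $\langle S\rangle$ has rank $|S|$, each $r\in R\setminus S$ lies outside $\langle S\rangle$, and the rank strictly increases at every step. Nothing is missing.
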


\section{The \Magma\ code}

In order to construct the complex reflection group $W = G_n$, where $4\le
n\le 37$, use the \Magma\ code
\begin{verbatim}
      roots, coroots, rho, W, J := ComplexRootDatum(n);
\end{verbatim}
In addition to $W$ this function returns a set of roots, a set of coroots, a
bijection \texttt{rho} from \texttt{roots} to \texttt{coroots} and a matrix
$J$ which defines a $W$-invariant positive definite hermitian form.  The
reflection \texttt{r} with root \texttt{a} and coroot \texttt{rho(a)} can be
obtained via the code
\begin{verbatim}
      r := PseudoReflection(a,rho(a));
\end{verbatim}

Given a reflection subgroup $H$ of $W$, we create a sequence \texttt{extn} of
simple extensions of $H$ (up to conjugacy):

\begin{enumerate}
  \item Let \texttt{orbreps} be a set of representatives for the orbits
      of the normaliser of $H$ in $W$ on the reflections which do not
      belong to~$H$.

  \medskip
  \item For each reflection \texttt{r} in \texttt{orbreps} construct the
      simple extension
  \begin{verbatim}
    G := sub< W | H, r >;
  \end{verbatim}
  \item If $G$ is not conjugate in $W$ to any simple extension already
      constructed, append $G$ to \texttt{extn}.
\end{enumerate}

Identification of the type of a reflection subgroup $H$ is carried out as
follows.
\begin{enumerate}
  \item Compute the list of indecomposable components $L_1$, $L_2$,
      \dots,~$L_k$ of the root system $L$ of $H$; that is, $L$ is the
      union of the $L_i$, the $L_i$ are pairwise orthogonal and the
      reflection subgroup of $H$ corresponding to $L_i$ is irreducible
      (as a reflection group).

  \medskip
  \item Compute the standard name of each indecomposable component of
      $H$. This is facilitated by the observation that the irreducible
      reflection groups $K$ which occur in the tables are uniquely
      determined by the pair of integers $(n,m)$, where $n$ is the order
      of $K$ and $m$ is the size of its line system.

  \medskip
  \item An associative array \texttt{refgroup} is used to map the
      standard name of a reflection subgroup to the actual subgroup.
  \begin{verbatim}
     refgroup := AssociativeArray(Parent(""));
  \end{verbatim}

\end{enumerate}

The full implementation of the \Magma\ code is available at
\begin{center}
\url{http://www.maths.usyd.edu.au/u/don/}
\end{center}
in the file \texttt{subsystems.m}. The function \texttt{showTable} creates
the data which is the basis for the tables in Section \ref{sec:tab}.  For
example
\begin{verbatim}
  load "subsystems.m";
  showTable(23);
\end{verbatim}
displays the data
\begin{verbatim}
  P | A1 | [ A1A1, A2, D2(5) ]
  P | A1A1 | [ H3, A1A1A1 ]
  P | A2 | [ H3 ]
  P | D2(5) | [ H3 ]
  P | H3 | []
  N | A1A1A1 | [ H3 ]
\end{verbatim}

\newpage
\section{The tables}\label{sec:tab}

\let\pcl\mathbf
\let\urg\relax

\begin{table}[h]%\small
\caption{Reflection subgroup classes of $G_{23} = \urg H_3$}\label{tab:H3}
\centering
\begin{tabular}{cll}
\toprule
&Class&Simple extensions\\
\toprule
\para&$\urg A_1$&$\urg D_2^{(5)},\ \urg A_2,\ 2\urg A_1$\\
\midrule
\para&$2\urg A_1$&$\urg H_3,\ 3\urg A_1$ \\
\para&$\urg A_2$&$\urg H_3$ \\
\para&$\urg D_2^{(5)}$&$\urg H_3$ \\
\midrule
&$3\urg A_1$&$\pcl H_3$ \\
\bottomrule
\end{tabular}
\end{table}

\vskip 0.5cm

\begin{table}[h]%\small
\caption{Reflection subgroup classes of $G_{24}= \urg
J_3^{(4)}$}\label{tab:J34} \centering
\begin{tabular}{cll}
\toprule
&Class&Simple extensions\\
\toprule
\para&$\urg A_1$&$\urg B_2,\ \urg A_2,\ 2\urg A_1.1,\ 2\urg A_1.2$\\
\midrule
&$2\urg A_1.1$&$\pcl B_2,\ \urg B_3.1,\
  \urg A_1+\urg B_2,\ \urg A_3.1,\ 3\urg A_1.1$\\
&$2\urg A_1.2$&$\pcl B_2,\ \urg B_3.2,\
  \urg A_1+\urg B_2,\ \urg A_3.2,\ 3\urg A_1.2$\\
\para&$\urg A_2$&$\urg J_3^{(4)},\ \urg B_3.1,\ \urg B_3.2,\
  \urg A_3.1,\ \urg A_3.2$\\
\para&$\urg B_2$&$\urg J_3^{(4)},\ \urg B_3.1,\
  \urg B_3.2,\ \urg A_1+\urg B_2$\\
\midrule
&$3\urg A_1.1$&$\urg B_3.1,\ \urg A_1+\urg B_2$\\
&$3\urg A_1.2$&$\urg B_3.2,\ \urg A_1+\urg B_2$\\
&$\urg A_1+\urg B_2$&$\pcl J_3^{(4)},\ \urg B_3.1,\
  \urg B_3.2$\\
&$\urg A_3.1$&$\pcl J_3^{(4)},\ \urg B_3.1$\\
&$\urg A_3.2$&$\pcl J_3^{(4)},\ \urg B_3.2$\\
&$\urg B_3.1$&$\pcl J_3^{(4)}$\\
&$\urg B_3.2$&$\pcl J_3^{(4)}$\\
\bottomrule
\end{tabular}
\end{table}

\vskip 0.5cm

\begin{table}[h]%\small
\caption{Reflection subgroup classes of $G_{25}= \urg L_3$}\label{tab:L3}
\centering
\begin{tabular}{cll}
\toprule
&Class&Simple extensions\\
\toprule
\para&$\urg L_1$&$\urg L_2,\ 2\urg L_1$\\
\midrule
\para&$2\urg L_1$&$\urg L_3,\ 3\urg L_1$\\
\para&$\urg L_2$&$\urg L_3$\\
\midrule
&$3\urg L_1$&$\pcl L_3$\\
\bottomrule
\end{tabular}
\end{table}

\begin{table}%\small
\caption{Reflection subgroup classes of $G_{26}= \urg M_3$}\label{tab:M3}
\centering
\begin{tabular}{cll}
\toprule
&Class&Simple extensions\\
\toprule
\para&$\urg L_1$&$\urg L_2,\ 2\urg L_1,\ \urg B_2^{(3)},\ \urg A_1+\urg L_1$\\
\para&$\urg A_1$&$\urg B_2^{(3)},\ \urg A_1+\urg L_1,\ \urg A_2$\\
\midrule
&$2\urg L_1$&$\pcl B_2^{(3)},\ \urg L_3,\ \urg B_2^{(3)}+\urg L_1,\
  3\urg L_1$\\
&$\urg A_2$&$\pcl B_2^{(3)},\ \urg B_3^{(3)},\ \urg D_3^{(3)},\
  \urg A_2+\urg L_1$\\
\para&$\urg L_2$&$\urg M_3,\ \urg L_3,\ \urg A_1+\urg L_2$\\
\para&$\urg B_2^{(3)}$&$\urg M_3,\ \urg B_3^{(3)},\ \urg B_2^{(3)}+\urg L_1$\\
\para&$\urg A_1+\urg L_1$&$\urg M_3,\ \urg B_3^{(3)},\
  \urg B_2^{(3)}+\urg L_1,\ \urg A_1+\urg L_2,\ \urg A_2+\urg L_1$\\
\midrule
&$\urg D_3^{(3)}$&$\urg B_3^{(3)}$\\
&$3\urg L_1$&$\urg L_3,\ \urg B_2^{(3)}+\urg L_1$\\
&$\urg B_2^{(3)}+\urg L_1$&$\pcl M_3,\ \urg B_3^{(3)}$\\
&$\urg A_2+\urg L_1$&$\pcl M_3,\ \urg B_3^{(3)},\ \urg B_2^{(3)}+\urg L_1$\\
&$\urg B_3^{(3)}$&$\pcl M_3$\\
&$\urg L_3$&$\pcl M_3$\\
&$\urg A_1+\urg L_2$&$\pcl M_3$\\
\bottomrule
\end{tabular}
\vskip 0.25cm
\end{table}

\begin{table}%\small
\caption{Reflection subgroup classes of $G_{27}= \urg
J_3^{(5)}$}\label{tab:J35} \centering
\begin{tabular}{cll}
\toprule
&Class&Simple extensions\\
\toprule
\para&$\urg A_1$&$\urg B_2,\ \urg D_2^{(5)},\ \urg A_2.1,\ \urg A_2.2,\
  2\urg A_1.1,\ 2\urg A_1.2$\\
\midrule
&$2\urg A_1.1$&$\pcl B_2,\ \urg H_3.1,\ \urg B_3.1,\
  \urg A_1+\urg B_2,\ \urg A_3.1,\ 3\urg A_1.1$\\
&$2\urg A_1.2$&$\pcl B_2,\ \urg H_3.2,\ \urg B_3.2,\
  \urg A_1+\urg B_2,\ \urg A_3.2,\ 3\urg A_1.2$\\
\para&$\urg A_2.1$&$\urg J_3^{(5)},\ \urg H_3.2,\ \urg B_3.1,\
  \urg D_3^{(3)},\ \urg A_3.1$\\
\para&$\urg A_2.2$&$\urg J_3^{(5)},\ \urg H_3.1,\ \urg B_3.2,\
  \urg D_3^{(3)},\ \urg A_3.2$\\
\para&$\urg D_2^{(5)}$&$\urg J_3^{(5)},\ \urg H_3.1,\ \urg H_3.2$\\
\para&$\urg B_2$&$\urg J_3^{(5)},\ \urg B_3.1,\
  \urg B_3.2,\ \urg A_1+\urg B_2$\\
\midrule
&$3\urg A_1.1$&$\urg H_3.1,\ \urg B_3.1,\ \urg A_1+\urg B_2$\\
&$3\urg A_1.2$&$\urg H_3.2,\ \urg B_3.2,\ \urg A_1+\urg B_2$\\
&$\urg A_3.1$&$\pcl J_3^{(5)},\ \urg B_3.1$\\
&$\urg A_3.2$&$\pcl J_3^{(5)},\ \urg B_3.2$\\
&$\urg A_1+\urg B_2$&$\pcl J_3^{(5)},\ \urg B_3.1,\
  \urg B_3.2$\\
&$\urg H_3.1$&$\pcl J_3^{(5)}$\\
&$\urg H_3.2$&$\pcl J_3^{(5)}$\\
&$\urg B_3.1$&$\pcl J_3^{(5)}$\\
&$\urg B_3.2$&$\pcl J_3^{(5)}$\\
&$\urg D_3^{(3)}$&$\pcl J_3^{(5)}$\\
\bottomrule
\end{tabular}
\end{table}

\begin{table}%\small
\caption{Reflection subgroup classes of $G_{28}= \urg F_4$}\label{tab:F4}
\centering
\begin{tabular}{cll}
\toprule
&Class&Simple extensions\\
\toprule
\para&$\urg A_1.1$&$\urg B_2,\ \urg A_2.1,\ 2\urg A_1.1,\ 2\urg A_1.3$\\
\para&$\urg A_1.2$&$\urg B_2,\ \urg A_2.2,\ 2\urg A_1.2,\ 2\urg A_1.3$\\
\midrule
&$2\urg A_1.1$&$\pcl B_2,\ (\urg A_1+\urg B_2).2,\
  \urg A_3.1,\ 3\urg A_1.1,\ 3\urg A_1.2$\\
&$2\urg A_1.2$&$\pcl B_2,\ (\urg A_1+\urg B_2).1,\
  \urg A_3.2,\ 3\urg A_1.3,\ 3\urg A_1.4$\\
\para&$2\urg A_1.3$&$\urg B_3.1,\ \urg B_3.2,\
  (\urg A_1+\urg B_2).2,\ (\urg A_1+\urg B_2).1,$\\
  &&\quad$(\urg A_1+\urg A_2).1,\ (\urg A_1+\urg A_2).2,\
  3\urg A_1.2,\ 3\urg A_1.3$\\
\para&$\urg A_2.1$&$\urg B_3.1,\ \urg A_3.1,\ (\urg A_1+\urg A_2).1$\\
\para&$\urg A_2.2$&$\urg B_3.2,\ \urg A_3.2,\ (\urg A_1+\urg A_2).2$\\
\para&$\urg B_2$&$\urg B_3.1,\ \urg B_3.2,\
  (\urg A_1+\urg B_2).1,\ (\urg A_1+\urg B_2).2$\\
\midrule
&$\urg A_3.1$&$\pcl B_3.1,\ \urg B_4.1,\
  \urg D_4.1,\ (\urg A_1+\urg A_3).1$\\
&$\urg A_3.2$&$\pcl B_3.2,\ \urg B_4.2,\
  \urg D_4.2,\ (\urg A_1+\urg A_3).2$\\
&$3\urg A_1.1$&$(\pcl A_1+\pcl B_2).2,\ (2\urg A_1+\urg B_2).1,\
  \urg D_4.1,\ 4\urg A_1.1$\\
&$3\urg A_1.2$&$\pcl B_3.1,\ (\urg A_1+\urg B_3).2,\
  (\urg A_1+\urg B_2).1,\ (2\urg A_1+\urg B_2).1,$\\
  &&\quad$(\urg A_1+\urg A_3).1,\ 4\urg A_1.2$\\
&$3\urg A_1.3$&$\pcl B_3.2,\ (\urg A_1+\urg B_3).1,\
  (\urg A_1+\urg B_2).2,\ (2\urg A_1+\urg B_2).2,$\\
  &&\quad$(\urg A_1+\urg A_3).2,\ 4\urg A_1.2$\\
&$3\urg A_1.4$&$(\pcl A_1+\pcl B_2).1,\ (2\urg A_1+\urg B_2).2,\
  \urg D_4.2,\ 4\urg A_1.3$\\
&$(\urg A_1+\urg B_2).1$&$\pcl B_3.1,\ \urg B_4.2,\
  2\urg B_2,\ (\urg A_1+\urg B_3).1,\
  (2\urg A_1+\urg B_2).2$\\
&$(\urg A_1+\urg B_2).2$&$\pcl B_3.2,\ \urg B_4.1,\
  2\urg B_2,\ (\urg A_1+\urg B_3).2,\
  (2\urg A_1+\urg B_2).1$\\
\para&$(\urg A_1+\urg A_2).1$&$\urg F_4,\ \urg B_4.1,\
  (\urg A_1+\urg B_3).1,\ 2\urg A_2,\ (\urg A_1+\urg A_3).1$\\
\para&$(\urg A_1+\urg A_2).2$&$\urg F_4,\ \urg B_4.2,\
  (\urg A_1+\urg B_3).2,\ 2\urg A_2,\ (\urg A_1+\urg A_3).2$\\
\para&$\urg B_3.1$&$\urg F_4,\ \urg B_4.1,\
  (\urg A_1+\urg B_3).1$\\
\para&$\urg B_3.2$&$\urg F_4,\ \urg B_4.2,\
  (\urg A_1+\urg B_3).2$\\
\midrule
&$4\urg A_1.1$&$\urg D_4.1,\ (2\urg A_1+\urg B_2).1$\\
&$4\urg A_1.2$&$(2\urg A_1+\urg B_2).1,\ (2\urg A_1+\urg B_2).2,\
  (\urg A_1+\urg B_3).1,\ (\urg A_1+\urg B_3).2$\\
&$4\urg A_1.3$&$\urg D_4.2,\ (2\urg A_1+\urg B_2).2$\\
&$\urg D_4.1$&$\urg B_4.1$\\
&$\urg D_4.2$&$\urg B_4.2$\\
&$(2\urg A_1+\urg B_2).1$&$2\urg B_2,\ \urg B_4.1,\
  (\urg A_1+\urg B_3).2$\\
&$(2\urg A_1+\urg B_2).2$&$2\urg B_2,\ \urg B_4.2,\
  (\urg A_1+\urg B_3).1$\\
&$2\urg B_2$&$\urg B_4.1,\ \urg B_4.2$\\
&$(\urg A_1+\urg A_3).1$&$\pcl F_4,\ \urg B_4.1,\
  (\urg A_1+\urg B_3).1$\\
&$(\urg A_1+\urg A_3).2$&$\pcl F_4,\ \urg B_4.2,\
  (\urg A_1+\urg B_3).2$\\
&$(\urg A_1+\urg B_3).1$&$\pcl F_4,\ \urg B_4.1$\\
&$(\urg A_1+\urg B_3).2$&$\pcl F_4,\ \urg B_4.2$\\
&$\urg B_4.1$&$\pcl F_4$\\
&$\urg B_4.2$&$\pcl F_4$\\
&$2\urg A_2$&$\pcl F_4$\\
\bottomrule
\end{tabular}
\end{table}

\begin{table}%\small
\caption{Reflection subgroup classes of $G_{29}= \urg N_4$}\label{tab:N4}
\centering
\begin{tabular}{cll}
\toprule
&Class&Simple extensions\\
\toprule
\para&$\urg A_1$&$\urg B_2,\ \urg A_2,\ 2\urg A_1.1,\ 2\urg A_1.2$\\
\midrule
&$2\urg A_1.1$&$\pcl B_2,\ \urg A_1+\urg B_2,\ \urg A_3.1,\
  \urg A_3.4,\ 3\urg A_1.1,\ 3\urg A_1.2$\\
\para&$2\urg A_1.2$&$\urg B_3,\ \urg A_1+\urg B_2,\
  \urg A_1+\urg A_2,\ \urg A_3.2,\ \urg A_3.3,\ 3\urg A_1.2$\\
\para&$\urg A_2$&$\urg B_3,\ \urg D_3^{(4)},\ \urg A_1+\urg A_2,\
  \urg A_3.1,\ \urg A_3.2,\ \urg A_3.3,\ \urg A_3.4$\\
\para&$\urg B_2$&$\urg B_3,\ \urg D_3^{(4)},\
  \urg A_1+\urg B_2$\\
\midrule
&$3\urg A_1.1$&$\urg A_1+\urg B_2,\ 2\urg A_1+\urg B_2,\
  \urg D_4.1,\ 4\urg A_1.1$\\
&$3\urg A_1.2$&$\pcl B_3,\
  \urg A_1+\urg B_2,\ \urg A_1+\urg B_3,\ 2\urg A_1+\urg B_2,\
  \urg D_4.2,\ 4\urg A_1.2,\ \urg A_1+\urg A_3$\\
&$\urg A_1+\urg B_2$&$\pcl B_3,\ 2\urg B_2,\ \urg B_4,\
  \urg A_1+\urg B_3,\ 2\urg A_1+\urg B_2,\ \urg D_4^{(4)}$\\
&$\urg A_3.1$&$\pcl B_3,\ \urg D_4.1,\ \urg D_4^{(4)},\
  \urg A_1+\urg A_3$\\
&$\urg A_3.4$&$\pcl D_3^{(4)},\ \urg B_4,\ \urg D_4.1,\
  \urg D_4.2,\ \urg D_4^{(4)}$\\
\para&$\urg A_3.2$&$\urg N_4,\ \urg D_4.2,\ \urg D_4^{(4)},\ \urg A_4.1$\\
\para&$\urg A_3.3$&$\urg N_4,\ \urg D_4.2,\ \urg D_4^{(4)},\ \urg A_4.2$\\
\para&$\urg A_1+\urg A_2$&$\urg N_4,\ \urg B_4,\
  \urg A_1+\urg B_3,\ \urg A_1+\urg A_3,\ \urg A_4.1,\ \urg A_4.2$\\
\para&$\urg B_3$&$\urg N_4,\ \urg B_4,\ \urg A_1+\urg B_3$\\
\para&$\urg D_3^{(4)}$&$\urg N_4,\ \urg D_4^{(4)}$\\
\midrule
&$4\urg A_1.1$&$\urg D_4.1,\ 2\urg A_1+\urg B_2$\\
&$4\urg A_1.2$&$\urg D_4.2,\ 2\urg A_1+\urg B_2,\ \urg A_1+\urg B_3$\\
&$2\urg A_1+\urg B_2$&$\urg B_4,\ \urg A_1+\urg B_3,\
  \urg D_4^{(4)},\ 2\urg B_2$\\
&$2\urg B_2$&$\urg B_4,\ \urg D_4^{(4)}$\\
&$\urg D_4.1$&$\urg B_4,\ \urg D_4^{(4)}$\\
&$\urg D_4.2$&$\pcl N_4,\ \urg D_4^{(4)}$\\
&$\urg A_1+\urg A_3$&$\pcl N_4,\ \urg A_1+\urg B_3,\ \urg B_4$\\
&$\urg A_1+\urg B_3$&$\pcl N_4,\ \urg B_4$\\
&$\urg A_4.1$&$\pcl N_4$\\
&$\urg A_4.2$&$\pcl N_4$\\
&$\urg B_4$&$\pcl N_4$\\
&$\urg D_4^{(4)}$&$\pcl N_4$\\
\bottomrule
\end{tabular}
\end{table}

\begin{table}\small
\caption{Reflection subgroup classes of $G_{30}= \urg H_4$}\label{tab:H4}
\centering
\begin{tabular}{cll}
\toprule
&Class&Simple extensions\\
\toprule
\para&$\urg A_1$&$\urg D_2^{(5)},\ \urg A_2,\ 2\urg A_1$\\
\midrule
\para&$2\urg A_1$&$\urg H_3,\ \urg A_1+\urg D_2^{(5)},\
  \urg A_1+\urg A_2,\ \urg A_3,\ 3\urg A_1$\\
\para&$\urg A_2$&$\urg H_3,\ \urg A_1+\urg A_2,\ \urg A_3$\\
\para&$\urg D_2^{(5)}$&$\urg H_3,\ \urg A_1+\urg D_2^{(5)}$\\
\midrule
&$3\urg A_1$&$\pcl H_3,\ \urg A_1+\urg H_3,\ \urg D_4,\ 4\urg A_1$\\
\para&$\urg A_3$&$\urg H_4,\ \urg D_4,\ \urg A_4$\\
\para&$\urg A_1+\urg A_2$&$\urg H_4,\ \urg A_1+\urg H_3,\ \urg A_4,\ 2\urg A_2$\\
\para&$\urg A_1+\urg D_2^{(5)}$&$\urg H_4,\ \urg A_1+\urg H_3,\ 2\urg D_2^{(5)}$\\
\para&$\urg H_3$&$\urg H_4,\ \urg A_1+\urg H_3$\\
\midrule
&$4\urg A_1$&$\urg A_1+\urg H_3,\ \urg D_4$\\
&$\urg A_1+\urg H_3$&$\pcl H_4$\\
&$\urg D_4$&$\pcl H_4$\\
&$2\urg D_2^{(5)}$&$\pcl H_4$\\
&$\urg A_4$&$\pcl H_4$\\
&$2\urg A_2$&$\pcl H_4$\\
\bottomrule
\end{tabular}
\vskip 0.5cm
\end{table}

\begin{table}\small
\caption{Reflection subgroup classes of $G_{31}= \urg O_4$}\label{tab:O4a}
\centering
\begin{tabular}{cll}
\toprule
&Class&Simple extensions (ranks 1, 2 and 3)\\
\toprule
\para&$\urg A_1$&$\urg B_2,\ \urg A_2,\ 2\urg A_1.1,\ 2\urg A_1.2$\\
\midrule
&$2\urg A_1.2$&$\urg B_2,\ (\urg A_1+\urg B_2).1,\ \urg A_3.2,\
  3\urg A_1.1,\ 3\urg A_1.2$\\
&$\urg B_2$&$\pcl B_2^{(4)},\ \urg B_3,\ \urg D_3^{(4)},\
  (\urg A_1+\urg B_2).1,\ (\urg A_1+\urg B_2).2$\\
\para&$2\urg A_1.1$&$\urg B_3,\ (\urg A_1+\urg B_2).1,\
  (\urg A_1+\urg B_2).2,\ \urg A_1+\urg A_2,\ \urg A_3.1,\ 3\urg A_1.1$\\
\para&$\urg A_2$&$\urg B_3,\ \urg D_3^{(4)},\ \urg A_1+\urg A_2,\
  \urg A_3.1,\ \urg A_3.2$\\
\para&$\urg B_2^{(4)}$&$\urg B_3^{(4)},\ \urg A_1+\urg B_2^{(4)}$\\
\midrule
&$\urg A_3.2$&$\urg B_3,\ \urg D_3^{(4)},\ \urg B_4.1,\ \urg D_4^{(4)},\
  \urg D_4.1,\ \urg D_4.2,\ \urg A_1+\urg A_3$\\
&$3\urg A_1.1$&$\urg B_3,\ (\urg A_1+\urg B_2).1,\ (\urg A_1+\urg B_2).2,\
  \urg D_4.1,\ \urg A_1+\urg B_3,$\\
  &&\quad$\urg A_1+\urg A_3,\ (2\urg A_1+\urg B_2).1,\
  (2\urg A_1+\urg B_2).2,\ 4\urg A_1.1$\\
&$3\urg A_1.2$&$(\urg A_1+\urg B_2).1,\ \urg D_4.2,\ (2\urg A_1+\urg B_2).1,\
  4\urg A_1.2$\\
&$(\urg A_1+\urg B_2).1$&$\urg B_3,\ \urg A_1+\urg B_2^{(4)},\ \urg B_4.1,\
  2\urg B_2.1,\ 2\urg B_2.2,\ \urg D_4^{(4)},\ \urg A_1+\urg B_3,$\\
  &&\quad$(2\urg A_1+\urg B_2).1$\\
&$(\urg A_1+\urg B_2).2$&$\pcl B_3^{(4)},\ \urg A_1+\urg B_2^{(4)},\
  \urg B_4.2,\ \urg A_1+\urg D_3^{(4)},\ 2\urg B_2.2,\
  (2\urg A_1+\urg B_2).2$\\
&$\urg A_1+\urg B_2^{(4)}$&$\pcl B_3^{(4)},\ \urg B_4^{(4)},\
  \urg A_1+\urg B_3^{(4)},\ \urg B_2+\urg B_2^{(4)},\
  2\urg A_1+\urg B_2^{(4)}$\\
&$\urg B_3$&$\pcl B_3^{(4)},\ \urg N_4,\ \urg F_4,\ \urg B_4.1,\ \urg B_4.2,\
  \urg A_1+\urg B_3$\\
&$\urg D_3^{(4)}$&$\pcl B_3^{(4)},\ \urg N_4,\ \urg D_4^{(4)},\ \urg A_1+\urg D_3^{(4)}$\\
\para&$\urg A_3.1$&$\urg N_4,\ \urg B_4.2,\ \urg D_4^{(4)},\
  \urg D_4.1,\ \urg A_4.1,\ \urg A_4.2$\\
\para&$\urg A_1+\urg A_2$&$\urg N_4,\ \urg F_4,\ \urg B_4.1,\ \urg B_4.2,\
  \urg A_1+\urg B_3,\ \urg A_1+\urg D_3^{(4)},\ \urg A_1+\urg A_3,$\\
  &&\quad$2\urg A_2,\ \urg A_4.1,\ \urg A_4.2$\\
\para&$\urg B_3^{(4)}$&$\urg O_4,\ \urg B_4^{(4)},\ \urg A_1+\urg B_3^{(4)}$\\
\bottomrule
\end{tabular}
\end{table}

\begin{table}%\small
\caption{Reflection subgroup classes of $G_{31}= \urg O_4$
(continued)}\label{tab:O4b} \centering
\begin{tabular}{cll}
\toprule
&Class&Simple extensions (rank 4)\\
\toprule
&$4\urg A_1.1$&$\urg A_1+\urg B_3,\ (2\urg A_1+\urg B_2).1,\
  (2\urg A_1+\urg B_2).2,\ \urg D_4.1$\\
&$4\urg A_1.2$&$\urg D_4.2,\ (2\urg A_1+\urg B_2).1$\\
&$(2\urg A_1+\urg B_2).1$&$\urg B_4.1,\ \urg D_4^{(4)},\ \urg A_1+\urg B_3,\
  2\urg A_1+\urg B_2^{(4)},\ 2\urg B_2.1,\ 2\urg B_2.2$\\
&$(2\urg A_1+\urg B_2).2$&$\urg A_1+\urg B_3^{(4)},\ 2\urg B_2.2,\
  \urg B_4.2,\ 2\urg A_1+\urg B_2^{(4)}$\\
&$\urg A_1+\urg A_3$&$\urg N_4,\ \urg F_4,\ \urg B_4.1,\ \urg B_4.2,\
  \urg A_1+\urg D_3^{(4)},\ \urg A_1+\urg B_3$\\
&$\urg A_1+\urg B_3$&$\urg N_4,\ \urg F_4,\
  \urg A_1+\urg B_3^{(4)},\ \urg B_4.1,\ \urg B_4.2$\\
&$2\urg B_2.1$&$\urg B_4.1,\ \urg D_4^{(4)},\ \urg B_2+\urg B_2^{(4)}$\\
&$2\urg B_2.2$&$\urg B_4.2,\ \urg B_4^{(4)},\ \urg B_2+\urg B_2^{(4)}$\\
&$2\urg A_1+\urg B_2^{(4)}$&$\urg B_4^{(4)},\ \urg A_1+\urg B_3^{(4)},\
  \urg B_2+\urg B_2^{(4)}$\\
&$\urg B_2+\urg B_2^{(4)}$&$\urg B_4^{(4)},\ 2\urg B_2^{(4)}$\\
&$\urg D_4.1$&$\urg N_4,\ \urg B_4.2,\ \urg D_4^{(4)}$\\
&$\urg D_4.2$&$\urg B_4.1,\ \urg D_4^{(4)}$\\
&$2\urg B_2^{(4)}$&$\urg B_4^{(4)}$\\
&$\urg D_4^{(4)}$&$\urg N_4,\ \urg B_4^{(4)}$\\
&$\urg B_4.1$&$\urg N_4,\ \urg B_4^{(4)},\ \urg F_4$\\
&$\urg B_4.2$&$\pcl O_4,\ \urg B_4^{(4)}$\\
&$\urg A_1+\urg D_3^{(4)}$&$\pcl O_4,\ \urg B_4^{(4)},\
  \urg A_1+\urg B_3^{(4)}$\\
&$\urg A_1+\urg B_3^{(4)}$&$\pcl O_4,\ \urg B_4^{(4)}$\\
&$\urg A_4.1$&$\pcl O_4,\ \urg N_4$\\
&$\urg A_4.2$&$\pcl O_4,\ \urg N_4$\\
&$2\urg A_2$&$\pcl O_4,\ \urg F_4$\\
&$\urg B_4^{(4)}$&$\pcl O_4$\\
&$\urg F_4$&$\pcl O_4$\\
&$\urg N_4$&$\pcl O_4$\\
\bottomrule
\end{tabular}
\vskip 0.5cm
\end{table}

\begin{table}%\small
\caption{Reflection subgroup classes of $G_{32}= \urg L_4$}\label{tab:L4}
\centering
\begin{tabular}{cll}
\toprule
&Class&Simple extensions\\
\toprule
\para&$\urg L_1$&$\urg L_2,\ 2\urg L_1$\\
\midrule
\para&$2\urg L_1$&$\urg L_3,\ \urg L_1+\urg L_2,\ 3\urg L_1$\\
\para&$\urg L_2$&$\urg L_3,\ \urg L_1+\urg L_2$\\
\midrule
&$3\urg L_1$&$\pcl L_3,\ \urg L_1+\urg L_3,\ 4\urg L_1$\\
\para&$\urg L_3$&$\urg L_4,\ \urg L_1+\urg L_3$\\
\para&$\urg L_1+\urg L_2$&$\urg L_4,\ \urg L_1+\urg L_3,\ 2\urg L_2$\\
\midrule
&$4\urg L_1$&$\urg L_1+\urg L_3$\\
&$\urg L_1+\urg L_3$&$\pcl L_4$\\
&$2\urg L_2$&$\pcl L_4$\\
\bottomrule
\end{tabular}
\end{table}

\begin{table}%\small
\caption{Reflection subgroup classes of $G_{33}= \urg K_5$}\label{tab:K5}
\centering
\begin{tabular}{cll}
\toprule
&Class&Simple extensions\\
\toprule
\para&$\urg A_1$&$\urg A_2,\ 2\urg A_1$\\
\midrule
\para&$2\urg A_1$&$\urg A_1+\urg A_2,\ \urg A_3,\ 3\urg A_1$\\
\para&$\urg A_2$&$\urg D_3^{(3)},\ \urg A_1+\urg A_2,\ \urg A_3$\\
\midrule
\para&$\urg A_1+\urg A_2$&$\urg D_4^{(3)},\ \urg A_1+\urg A_3,\ \urg A_4,\
  2\urg A_2$\\
\para&$\urg A_3$&$\urg D_4,\ \urg D_4^{(3)},\ \urg A_1+\urg A_3,\ \urg A_4$\\
\para&$3\urg A_1$&$\urg D_4,\ \urg A_1+\urg A_3,\ 4\urg A_1$\\
\para&$\urg D_3^{(3)}$&$\urg D_4^{(3)}$\\
\midrule
&$4\urg A_1$&$\pcl D_4,\ \urg A_1+\urg D_4,\ 5\urg A_1$\\
&$2\urg A_2$&$\pcl D_4^{(3)},\ \urg A_5$\\
\para&$\urg A_1+\urg A_3$&$\urg K_5,\ \urg A_1+\urg D_4,\ \urg A_5$\\
\para&$\urg D_4$&$\urg K_5,\ \urg A_1+\urg D_4$\\
\para&$\urg A_4$&$\urg K_5,\ \urg A_5$\\
\para&$\urg D_4^{(3)}$&$\urg K_5$\\
\midrule
&$5\urg A_1$&$\urg A_1+\urg D_4$\\
&$\urg A_1+\urg D_4$&$\pcl K_5$\\
&$\urg A_5$&$\pcl K_5$\\
\bottomrule
\end{tabular}
\vskip 0.6cm
\end{table}

\begin{table}\small
\caption{Reflection subgroup classes of $G_{34}= \urg K_6$}\label{tab:K6a}
\centering
\begin{tabular}{cll}
\toprule
&Class&Simple extensions (ranks 1 to 4)\\
\toprule
\para&$\urg A_1$&$\urg A_2,\ 2\urg A_1$\\
\midrule
\para&$2\urg A_1$&$\urg A_1+\urg A_2,\ \urg A_3,\ 3\urg A_1$\\
\para&$\urg A_2$&$\urg D_3^{(3)},\ \urg A_1+\urg A_2,\ \urg A_3$\\
\midrule
\para&$\urg A_1+\urg A_2$&$\urg D_4^{(3)},\ \urg A_1+\urg D_3^{(3)},\
  \urg A_1+\urg A_3,\ 2\urg A_1+\urg A_2,\ \urg A_4,\ 2\urg A_2.1,\
  2\urg A_2.2$\\
\para&$\urg A_3$&$\urg A_4,\ \urg D_4,\ \urg D_4^{(3)},\ \urg A_1+\urg A_3$\\
\para&$3\urg A_1$&$\urg D_4,\ \urg A_1+\urg A_3,\ 2\urg A_1+\urg A_2,\
  4\urg A_1$\\
\para&$\urg D_3^{(3)}$&$\urg D_4^{(3)},\ \urg A_1+\urg D_3^{(3)}$\\
\midrule
&$4\urg A_1$&$\pcl D_4,\ \urg A_1+\urg D_4,\ 2\urg A_1+\urg A_3,\ 5\urg A_1$\\
&$2\urg A_2.2$&$\pcl D_4^{(3)},\ \urg A_2+\urg D_3^{(3)},\
  \urg A_1+2\urg A_2,\ \urg A_5.2$\\
\para&$2\urg A_2.1$&$\urg D_5^{(3)},\ \urg A_2+\urg D_3^{(3)},\
  \urg A_2+\urg A_3,\ \urg A_5.1,\ \urg A_5.3$\\
\para&$\urg A_1+\urg A_3$&$\urg K_5,\ \urg D_5,\ \urg A_1+\urg D_4^{(3)},\
  \urg A_1+\urg D_4,\ \urg A_2+\urg A_3,\ \urg A_1+\urg A_4,$\\
  &&\quad$2\urg A_1+\urg A_3,\ \urg A_5.1,\ \urg A_5.2,\ \urg A_5.3$\\
\para&$2\urg A_1+\urg A_2$&$\urg D_5,\ \urg A_1+\urg D_4^{(3)},\
  \urg A_2+\urg A_3,\ \urg A_1+\urg A_4,\ \urg A_1+2\urg A_2,\
  2\urg A_1+\urg A_3$\\
\para&$\urg A_1+\urg D_3^{(3)}$&$\urg D_5^{(3)},\ \urg A_2+\urg D_3^{(3)},\
  \urg A_1+\urg D_4^{(3)}$\\
\para&$\urg D_4$&$\urg K_5,\ \urg D_5,\ \urg A_1+\urg D_4$\\
\para&$\urg A_4$&$\urg K_5,\ \urg D_5,\ \urg D_5^{(3)},\ \urg A_1+\urg A_4,\
  \urg A_5.1,\ \urg A_5.2,\ \urg A_5.3$\\
\para&$\urg D_4^{(3)}$&$\urg K_5,\ \urg D_5^{(3)},\ \urg A_1+\urg D_4^{(3)}$\\
\bottomrule
\end{tabular}
\end{table}

\begin{table}%\small
\caption{Reflection subgroup classes of $G_{34}= \urg K_6$
(continued)}\label{tab:K6b} \centering
\begin{tabular}{cll}
\toprule
&Class&Simple extensions (ranks 5 and 6)\\
\toprule
&$5\urg A_1$&$\urg A_1+\urg D_4,\ 2\urg A_1+\urg D_4,\ 6\urg A_1$\\
&$\urg A_2+\urg D_3^{(3)}$&$\pcl D_5^{(3)},\ \urg D_6^{(3)},\
  2\urg D_3^{(3)},\ \urg A_2+\urg D_4^{(3)}$\\
&$\urg A_1+\urg D_4$&$\pcl K_5,\ \urg A_1+\urg K_5,\ \urg D_6,\
  2\urg A_1+\urg D_4$\\
&$2\urg A_1+\urg A_3$&$\pcl D_5,\ \urg A_1+\urg K_5,\ \urg D_6,\
  2\urg A_1+\urg D_4,\ \urg A_1+\urg A_5,\ 2\urg A_3$\\
&$\urg A_1+2\urg A_2$&$\pcl A_1+\pcl D_4^{(3)},\ \urg E_6,\ \urg A_2+\urg D_4^{(3)},\
  \urg A_1+\urg A_5,\ 3\urg A_2$\\
&$\urg A_5.2$&$\pcl K_5,\ \urg E_6,\ \urg D_6^{(3)},\ \urg A_1+\urg A_5$\\
\para&$\urg A_5.1$&$\urg K_6,\ \urg D_6,\ \urg D_6^{(3)},\ \urg A_6.1$\\
\para&$\urg A_5.3$&$\urg K_6,\ \urg D_6,\ \urg D_6^{(3)},\ \urg A_6.2$\\
\para&$\urg A_2+\urg A_3$&$\urg K_6,\ \urg D_6,\ \urg D_6^{(3)},\
  \urg A_2+\urg D_4^{(3)},\ 2\urg A_3,\ \urg A_6.1,\ \urg A_6.2$\\
\para&$\urg A_1+\urg A_4$&$\urg K_6,\ \urg E_6,\ \urg A_1+\urg K_5,\
  \urg A_1+\urg A_5,\ \urg A_6.1,\ \urg A_6.2$\\
\para&$\urg A_1+\urg D_4^{(3)}$&$\urg K_6,\ \urg A_1+\urg K_5,\ \urg D_6^{(3)},\
  \urg A_2+\urg D_4^{(3)}$\\
\para&$\urg D_5$&$\urg K_6,\ \urg E_6,\ \urg D_6$\\
\para&$\urg D_5^{(3)}$&$\urg K_6,\ \urg D_6^{(3)}$\\
\para&$\urg K_5$&$\urg K_6,\ \urg A_1+\urg K_5$\\
\midrule
&$6\urg A_1$&$2\urg A_1+\urg D_4$\\
&$2\urg A_1+\urg D_4$&$\urg A_1+\urg K_5,\ \urg D_6$\\
&$3\urg A_2$&$\urg E_6,\ \urg A_2+\urg D_4^{(3)}$\\
&$2\urg D_3^{(3)}$&$\urg D_6^{(3)}$\\
&$\urg A_1+\urg A_5$&$\pcl K_6,\ \urg A_1+\urg K_5,\ \urg E_6$\\
&$\urg A_2+\urg D_4^{(3)}$&$\pcl K_6,\ \urg D_6^{(3)}$\\
&$2\urg A_3$&$\pcl K_6,\ \urg D_6$\\
&$\urg A_6.1$&$\pcl K_6$\\
&$\urg A_6.2$&$\pcl K_6$\\
&$\urg A_1+\urg K_5$&$\pcl K_6$\\
&$\urg D_6^{(3)}$&$\pcl K_6$\\
&$\urg D_6$&$\pcl K_6$\\
&$\urg E_6$&$\pcl K_6$\\
\bottomrule
\end{tabular}
\end{table}

\begin{table}%\small
\caption{Reflection subgroup classes of $G_{35}= \urg E_6$}\label{tab:E6}
\centering
\begin{tabular}{cll}
\toprule
&Class&Simple extensions\\
\toprule
\para&$\urg A_1$&$\urg A_2,\ 2\urg A_1$\\
\midrule
\para&$2\urg A_1$&$\urg A_1+\urg A_2,\ \urg A_3,\ 3\urg A_1$\\
\para&$\urg A_2$&$\urg A_1+\urg A_2,\ \urg A_3$\\
\midrule
\para&$\urg A_1+\urg A_2$&$2\urg A_1+\urg A_2,\ \urg A_1+\urg A_3,\
  \urg A_4,\ 2\urg A_2$\\
\para&$\urg A_3$&$\urg D_4,\ \urg A_1+\urg A_3,\ \urg A_4$\\
\para&$3\urg A_1$&$\urg D_4,\ \urg A_1+\urg A_3,\
  2\urg A_1+\urg A_2,\ 4\urg A_1$\\
\midrule
&$4\urg A_1$&$\pcl D_4,\ 2\urg A_1+\urg A_3$\\
\para&$2\urg A_1+\urg A_2$&$\urg D_5,\ \urg A_1+\urg A_4,\ 2\urg A_1+\urg A_3,\
  \urg A_1+2\urg A_2$\\
\para&$\urg A_1+\urg A_3$&$\urg D_5,\ \urg A_1+\urg A_4,\ 2\urg A_1+\urg A_3,\
  \urg A_5$\\
\para&$\urg A_4$&$\urg D_5,\ \urg A_1+\urg A_4,\ \urg A_5$\\
\para&$2\urg A_2$&$\urg A_1+2\urg A_2,\ \urg A_5$\\
\para&$\urg D_4$&$\urg D_5$\\
\midrule
&$2\urg A_1+\urg A_3$&$\pcl D_5,\ \urg A_1+\urg A_5$\\
\para&$\urg A_1+2\urg A_2$&$\urg E_6,\ \urg A_1+\urg A_5,\ 3\urg A_2$\\
\para&$\urg A_1+\urg A_4$&$\urg E_6,\ \urg A_1+\urg A_5$\\
\para&$\urg A_5$&$\urg E_6,\ \urg A_1+\urg A_5$\\
\para&$\urg D_5$&$\urg E_6$\\
\midrule
&$\urg A_1+\urg A_5$&$\pcl E_6$\\
&$3\urg A_2$&$\pcl E_6$\\
\bottomrule
\end{tabular}
\end{table}

\begin{table}%\small
\caption{Reflection subgroup classes of $G_{36}= \urg E_7$}\label{tab:E7a}
\centering
\begin{tabular}{cll}
\toprule
&Class&Simple extensions (ranks 1 to 5)\\
\toprule
\para&$\urg A_1$&$2\urg A_1,\ \urg A_2$\\
\midrule
\para&$2\urg A_1$&$\urg A_1+\urg A_2,\ \urg A_3,\ 3\urg A_1.1,\ 3\urg A_1.2$\\
\para&$\urg A_2$&$\urg A_3,\ \urg A_1+\urg A_2$\\
\midrule
\para&$\urg A_1+\urg A_2$&$(\urg A_1+\urg A_3).1,\ (\urg A_1+\urg A_3).2,\
  2\urg A_1+\urg A_2,\ \urg A_4,\ 2\urg A_2$\\
\para&$\urg A_3$&$\urg D_4,\ (\urg A_1+\urg A_3).1,\ (\urg A_1+\urg A_3).2,\
  \urg A_4$\\
\para&$3\urg A_1.1$&$\urg D_4,\ 2\urg A_1+\urg A_2,\
  (\urg A_1+\urg A_3).1,\ 4\urg A_1.1,\ 4\urg A_1.2$\\
\para&$3\urg A_1.2$&$(\urg A_1+\urg A_3).2,\ 4\urg A_1.2$\\
\midrule
&$4\urg A_1.1$&$\pcl D_4,\ (2\urg A_1+\urg A_3).1,\ 5\urg A_1$\\
\para&$4\urg A_1.2$&$\urg A_1+\urg D_4,\ (2\urg A_1+\urg A_3).2,\
  3\urg A_1+\urg A_2,\ 5\urg A_1$\\
\para&$(\urg A_1+\urg A_3).1$&$\urg D_5,\ \urg A_1+\urg D_4,\
  \urg A_1+\urg A_4,\ \urg A_2+\urg A_3,$\\
  &&\quad$(2\urg A_1+\urg A_3).1,\ (2\urg A_1+\urg A_3).2,\ \urg A_5.1$\\
\para&$(\urg A_1+\urg A_3).2$&$\urg A_1+\urg D_4,\
  (2\urg A_1+\urg A_3).2,\ \urg A_5.2$\\
\para&$2\urg A_1+\urg A_2$&$\urg D_5,\ \urg A_1+\urg A_4,\
  \urg A_1+2\urg A_2,\ \urg A_2+\urg A_3,\ 3\urg A_1+\urg A_2,$\\
  &&\quad$(2\urg A_1+\urg A_3).1,\ (2\urg A_1+\urg A_3).2$\\
\para&$\urg A_4$&$\urg D_5,\ \urg A_1+\urg A_4,\ \urg A_5.1,\ \urg A_5.2$\\
\para&$2\urg A_2$&$\urg A_2+\urg A_3,\ \urg A_1+2\urg A_2,\ \urg A_5.1,\ \urg A_5.2$\\
\para&$\urg D_4$&$\urg D_5,\ \urg A_1+\urg D_4$\\
\midrule
&$5\urg A_1$&$\pcl A_1+\pcl D_4,\ 2\urg A_1+\urg D_4,\
  3\urg A_1+\urg A_3,\ 6\urg A_1$\\
&$(2\urg A_1+\urg A_3).1$&$\pcl D_5,\ 2\urg A_1+\urg D_4,\
  3\urg A_1+\urg A_3,\ (\urg A_1+\urg A_5).1,\ 2\urg A_3$\\
\para&$(2\urg A_1+\urg A_3).2$&$\urg D_6,\ \urg A_1+\urg D_5,\
  2\urg A_1+\urg D_4,\ (\urg A_1+\urg A_5).2,$\\
  &&\quad$\urg A_1+\urg A_2+\urg A_3,\ 3\urg A_1+\urg A_3$\\
\para&$\urg A_1+\urg D_4$&$\urg D_6,\ \urg A_1+\urg D_5,\
  2\urg A_1+\urg D_4$\\
\para&$\urg A_1+\urg A_4$&$\urg E_6,\ \urg A_2+\urg A_4,\ \urg A_1+\urg D_5,\
  \urg A_6,\ (\urg A_1+\urg A_5).1,\ (\urg A_1+\urg A_5).2$\\
\para&$\urg A_2+\urg A_3$&$\urg D_6,\ \urg A_6,\ \urg A_1+\urg A_2+\urg A_3,\
  \urg A_2+\urg A_4,\ 2\urg A_3$\\
\para&$\urg A_1+2\urg A_2$&$\urg E_6,\ \urg A_1+\urg A_2+\urg A_3,\
  (\urg A_1+\urg A_5).1,\ (\urg A_1+\urg A_5).2,\ \urg A_2+\urg A_4,\ 3\urg A_2$\\
\para&$3\urg A_1+\urg A_2$&$\urg A_1+\urg D_5,\ 3\urg A_1+\urg A_3,\
  \urg A_1+\urg A_2+\urg A_3$\\
\para&$\urg A_5.1$&$\urg E_6,\ \urg A_6,\ \urg D_6,\ (\urg A_1+\urg A_5).1$\\
\para&$\urg A_5.2$&$\urg D_6,\ (\urg A_1+\urg A_5).2$\\
\para&$\urg D_5$&$\urg E_6,\ \urg D_6,\ \urg A_1+\urg D_5$\\
\bottomrule
\end{tabular}
\end{table}

\begin{table}%\small
\caption{Reflection subgroup classes of $G_{36}= \urg E_7$
(continued)}\label{tab:E7b} \centering
\begin{tabular}{cll}
\toprule
&Class&Simple extensions (ranks 6 and 7)\\
\toprule
&$6\urg A_1$&$2\urg A_1+\urg D_4,\ 3\urg A_1+\urg D_4,\ 7\urg A_1$\\
&$2\urg A_1+\urg D_4$&$\pcl D_6,\ \urg A_1+\urg D_6,\
  3\urg A_1+\urg D_4$\\
&$2\urg A_3$&$\pcl D_6,\ \urg A_7,\ \urg A_1+2\urg A_3$\\
&$3\urg A_1+\urg A_3$&$\pcl A_1+\pcl D_5,\ \urg A_1+\urg D_6,\
  \urg A_1+2\urg A_3,\ 3\urg A_1+\urg D_4$\\
&$3\urg A_2$&$\pcl E_6,\ \urg A_2+\urg A_5$\\
&$(\urg A_1+\urg A_5).1$&$\pcl E_6,\ \urg A_1+\urg D_6,\ \urg A_7$\\
\para&$(\urg A_1+\urg A_5).2$&$\urg E_7,\ \urg A_2+\urg A_5,\ \urg A_1+\urg D_6$\\
\para&$\urg A_1+\urg D_5$&$\urg E_7,\ \urg A_1+\urg D_6$\\
\para&$\urg A_2+\urg A_4$&$\urg E_7,\ \urg A_2+\urg A_5,\ \urg A_7$\\
\para&$\urg A_1+\urg A_2+\urg A_3$&$\urg E_7,\ \urg A_1+\urg D_6,\
  \urg A_2+\urg A_5,\ \urg A_1+2\urg A_3$\\
\para&$\urg A_6$&$\urg E_7,\ \urg A_7$\\
\para&$\urg D_6$&$\urg E_7,\ \urg A_1+\urg D_6$\\
\para&$\urg E_6$&$\urg E_7$\\
\midrule
&$7\urg A_1$&$3\urg A_1+\urg D_4$\\
&$3\urg A_1+\urg D_4$&$\urg A_1+\urg D_6$\\
&$\urg A_1+2\urg A_3$&$\pcl E_7,\ \urg A_1+\urg D_6$\\
&$\urg A_1+\urg D_6$&$\pcl E_7$\\
&$\urg A_2+\urg A_5$&$\pcl E_7$\\
&$\urg A_7$&$\pcl E_7$\\
\bottomrule
\end{tabular}
\vskip 0.5cm
\end{table}

\begin{table}%\small
\caption{Reflection subgroup classes of $G_{37}= \urg E_8$}\label{tab:E8a}
\centering
\begin{tabular}{cll}
\toprule
&Class&Simple extensions (ranks 1 to 4)\\
\toprule
\para&$\urg A_1$&$2\urg A_1,\ \urg A_2$\\
\midrule
\para&$2\urg A_1$&$\urg A_1+\urg A_2,\ 3\urg A_1,\ \urg A_3$\\
\para&$\urg A_2$&$\urg A_1+\urg A_2,\ \urg A_3$\\
\midrule
\para&$\urg A_1+\urg A_2$&$2\urg A_1+\urg A_2,\ 2\urg A_2,\ \urg A_4,\ \urg A_1+\urg A_3$\\
\para&$\urg A_3$&$\urg D_4,\ \urg A_4,\ \urg A_1+\urg A_3$\\
\para&$3\urg A_1$&$\urg D_4,\ 2\urg A_1+\urg A_2,\ \urg A_1+\urg A_3,\
  4\urg A_1.1,\ 4\urg A_1.2$\\
\midrule
&$4\urg A_1.1$&$\pcl D_4,\ (2\urg A_1+\urg A_3).1,\ 5\urg A_1$\\
\para&$4\urg A_1.2$&$\urg A_1+\urg D_4,\ (2\urg A_1+\urg A_3).2,\ 3\urg A_1+\urg A_2,\
  5\urg A_1$\\
\para&$2\urg A_1+\urg A_2$&$\urg D_5,\ \urg A_1+\urg A_4,\ \urg A_1+2\urg A_2,\
  \urg A_2+\urg A_3,\ 3\urg A_1+\urg A_2,$\\
  &&\quad$(2\urg A_1+\urg A_3).1,\ (2\urg A_1+\urg A_3).2$\\
\para&$2\urg A_2$&$\urg A_2+\urg A_3,\ \urg A_5,\ \urg A_1+2\urg A_2$\\
\para&$\urg A_4$&$\urg D_5,\ \urg A_1+\urg A_4,\ \urg A_5$\\
\para&$\urg A_1+\urg A_3$&$\urg D_5,\ \urg A_1+\urg D_4,\
  \urg A_1+\urg A_4,\ \urg A_2+\urg A_3,\ \urg A_5,$\\
  &&\quad$(2\urg A_1+\urg A_3).1,\ (2\urg A_1+\urg A_3).2$\\
\para&$\urg D_4$&$\urg D_5,\ \urg A_1+\urg D_4$\\
\bottomrule
\end{tabular}
\end{table}

\clearpage

\begin{table}%\small
\caption{Reflection subgroup classes of $G_{37}= \urg E_8$
(continued)}\label{tab:E8b} \centering
\begin{tabular}{cll}
\toprule
&Class&Simple extensions (ranks 5 and 6)\\
\toprule
&$5\urg A_1$&$\pcl A_1+\pcl D_4,\ 3\urg A_1+\urg A_3,\ 2\urg A_1+\urg D_4,\
  4\urg A_1+\urg A_2,\ 6\urg A_1$\\
&$(2\urg A_1+\urg A_3).1$&$\pcl D_5,\ (\urg A_1+\urg A_5).1,\
  2\urg A_3.1,\ 2\urg A_1+\urg D_4,\ 3\urg A_1+\urg A_3$\\
\para&$(2\urg A_1+\urg A_3).2$&$\urg D_6,\ (\urg A_1+\urg A_5).2,\
  2\urg A_3.2,\ 2\urg A_1+\urg D_4,\ 3\urg A_1+\urg A_3,$\\
  &&\quad$\urg A_1+\urg D_5,\ 2\urg A_1+\urg A_4,\ \urg A_1+\urg A_2+\urg A_3$\\
\para&$\urg A_1+\urg A_4$&$\urg E_6,\ \urg A_1+\urg D_5,\
  (\urg A_1+\urg A_5).1,\ (\urg A_1+\urg A_5).2,\ \urg A_2+\urg A_4,$\\
  &&\quad$2\urg A_1+\urg A_4,\ \urg A_6$\\
\para&$\urg A_1+2\urg A_2$&$\urg E_6,\ (\urg A_1+\urg A_5).1,\
  (\urg A_1+\urg A_5).2,\ \urg A_2+\urg A_4,\ \urg A_1+\urg A_2+\urg A_3,$\\
  &&\quad$2\urg A_1+2\urg A_2,\ 3\urg A_2$\\
\para&$\urg A_2+\urg A_3$&$\urg D_6,\ \urg A_2+\urg D_4,\ \urg A_6,\
  \urg A_2+\urg A_4,\ 2\urg A_3.1,\ 2\urg A_3.2,\ \urg A_1+\urg A_2+\urg A_3$\\
\para&$3\urg A_1+\urg A_2$&$\urg A_1+\urg D_5,\ \urg A_2+\urg D_4,\
  \urg A_1+\urg A_2+\urg A_3,\ 2\urg A_1+\urg A_4,\ 3\urg A_1+\urg A_3,$\\
  &&\quad$2\urg A_1+2\urg A_2,\ 4\urg A_1+\urg A_2$\\
\para&$\urg D_5$&$\urg E_6,\ \urg D_6,\ \urg A_1+\urg D_5$\\
\para&$\urg A_5$&$\urg E_6,\ \urg D_6,\ \urg A_6,\ (\urg A_1+\urg A_5).1,\
  (\urg A_1+\urg A_5).2$\\
\para&$\urg A_1+\urg D_4$&$\urg D_6,\ \urg A_2+\urg D_4,\
  \urg A_1+\urg D_5,\ 2\urg A_1+\urg D_4$\\
\midrule
&$6\urg A_1$&$2\urg A_1+\urg D_4,\ 3\urg A_1+\urg D_4,\ 4\urg A_1+\urg A_3,\
  7\urg A_1$\\
&$2\urg A_1+\urg D_4$&$\pcl D_6,\ \urg A_1+\urg D_6,\
  \urg A_3+\urg D_4,\ 2\urg A_1+\urg D_5,\ 3\urg A_1+\urg D_4$\\
&$3\urg A_1+\urg A_3$&$\pcl A_1+\pcl D_5,\ 2\urg A_1+\urg A_5,\
  \urg A_1+\urg D_6,\ 2\urg A_1+\urg D_5,\ \urg A_3+\urg D_4,$\\
  &&\quad$\urg A_1+2\urg A_3,\ 3\urg A_1+\urg D_4,\
  2\urg A_1+\urg A_2+\urg A_3,\ 4\urg A_1+\urg A_3$\\
&$4\urg A_1+\urg A_2$&$\pcl A_2+\pcl D_4,\ 2\urg A_1+\urg D_5,\
  2\urg A_1+\urg A_2+\urg A_3,\ 4\urg A_1+\urg A_3$\\
&$3\urg A_2$&$\pcl E_6,\ \urg A_2+\urg A_5,\ \urg A_1+3\urg A_2$\\
&$2\urg A_3.1$&$\pcl D_6,\ \urg A_7.1,\ \urg A_3+\urg D_4,\
  \urg A_1+2\urg A_3$\\
&$(\urg A_1+\urg A_5).1$&$\pcl E_6,\ \urg A_7.1,\ \urg A_1+\urg D_6,\
  2\urg A_1+\urg A_5$\\
\para&$2\urg A_3.2$&$\urg A_7.2,\ \urg D_7,\ \urg A_3+\urg A_4,\
  \urg A_3+\urg D_4$\\
\para&$(\urg A_1+\urg A_5).2$&$\urg E_7,\ \urg A_7.2,\ \urg A_1+\urg D_6,\
  2\urg A_1+\urg A_5,\ \urg A_1+\urg E_6,\ \urg A_1+\urg A_6,\ \urg A_2+\urg A_5$\\
\para&$\urg E_6$&$\urg E_7,\ \urg A_1+\urg E_6$\\
\para&$\urg D_6$&$\urg E_7,\ \urg D_7,\ \urg A_1+\urg D_6$\\
\para&$\urg A_6$&$\urg E_7,\ \urg D_7,\ \urg A_7.1,\ \urg A_7.2,\ \urg A_1+\urg A_6$\\
\para&$\urg A_2+\urg A_4$&$\urg E_7,\ \urg A_7.1,\ \urg A_7.2,\ \urg A_3+\urg A_4,\
  \urg A_2+\urg D_5,\ \urg A_2+\urg A_5,\ \urg A_1+\urg A_2+\urg A_4$\\
\para&$\urg A_1+\urg D_5$&$\urg E_7,\ \urg A_1+\urg D_6,\ \urg D_7,\
  \urg A_1+\urg E_6,\ 2\urg A_1+\urg D_5,\ \urg A_2+\urg D_5$\\
\para&$\urg A_1+\urg A_2+\urg A_3$&$\urg E_7,\ \urg A_1+\urg D_6,\
  \urg A_2+\urg D_5,\ \urg A_3+\urg A_4,\ \urg A_1+\urg A_6,\
  2\urg A_1+\urg A_2+\urg A_3,$\\
  &&\quad$\urg A_1+\urg A_2+\urg A_4,\ \urg A_2+\urg A_5,\ \urg A_1+2\urg A_3$\\
\para&$2\urg A_1+\urg A_4$&$\urg D_7,\ 2\urg A_1+\urg A_5,\
  \urg A_1+\urg A_6,\ \urg A_3+\urg A_4,\ 2\urg A_1+\urg D_5,$\\
  &&\quad$\urg A_1+\urg E_6,\ \urg A_1+\urg A_2+\urg A_4$\\
\para&$2\urg A_1+2\urg A_2$&$2\urg A_1+\urg A_5,\ \urg A_2+\urg D_5,\
  \urg A_1+\urg A_2+\urg A_4,\ \urg A_1+\urg E_6,\ \urg A_1+3\urg A_2,$\\
  &&\quad$2\urg A_1+\urg A_2+\urg A_3$\\
\para&$\urg A_2+\urg D_4$&$\urg D_7,\ \urg A_2+\urg D_5,\
  \urg A_3+\urg D_4$\\
\bottomrule
\end{tabular}
\end{table}

\begin{table}%\small
\caption{Reflection subgroup classes of $G_{37}= \urg E_8$
(continued)}\label{tab:E8c} \centering
\begin{tabular}{cll}
\toprule
&Class&Simple extensions (ranks 7 and 8)\\
\toprule
&$7\urg A_1$&$3\urg A_1+\urg D_4,\ 4\urg A_1+\urg D_4,\ 8\urg A_1$\\
&$4\urg A_1+\urg A_3$&$2\urg A_1+\urg D_5,\ \urg A_3+\urg D_4,\
  2\urg A_1+\urg D_6,\ 2\urg A_1+2\urg A_3,\ 4\urg A_1+\urg D_4$\\
&$3\urg A_1+\urg D_4$&$\pcl A_1+\pcl D_6,\ 2\urg A_1+\urg D_6,\
  2\urg D_4,\ 4\urg A_1+\urg D_4$\\
&$2\urg A_1+\urg D_5$&$\pcl D_7,\ \urg A_1+\urg E_7,\
  2\urg A_1+\urg D_6,\ \urg A_3+\urg D_5$\\
&$\urg A_3+\urg D_4$&$\pcl D_7,\ \urg D_8,\ \urg A_3+\urg D_5,\
  2\urg D_4$\\
&$2\urg A_1+\urg A_2+\urg A_3$&$\pcl A_2+\pcl D_5,\
  2\urg A_1+\urg D_6,\ \urg A_1+\urg E_7,\
  \urg A_3+\urg D_5,\ \urg A_1+\urg A_2+\urg A_5,$\\
  &&\quad$2\urg A_1+2\urg A_3$\\
&$\urg A_1+3\urg A_2$&$\pcl A_1+\pcl E_6,\ \urg A_1+\urg A_2+\urg A_5,\
  \urg A_2+\urg E_6,\ 4\urg A_2$\\
&$2\urg A_1+\urg A_5$&$\pcl A_1+\pcl E_6,\ \urg D_8,\ \urg A_1+\urg A_7,\
  2\urg A_1+\urg D_6,\ \urg A_1+\urg E_7,\ \urg A_1+\urg A_2+\urg A_5$\\
&$\urg A_1+2\urg A_3$&$\pcl E_7,\ \urg A_1+\urg D_6,\ \urg A_3+\urg D_5,\
  \urg A_1+\urg A_7,\ 2\urg A_1+2\urg A_3$\\
&$\urg A_2+\urg A_5$&$\pcl E_7,\ \urg A_8,\ \urg A_2+\urg E_6,\
  \urg A_1+\urg A_2+\urg A_5$\\
&$\urg A_1+\urg D_6$&$\pcl E_7,\ \urg D_8,\ \urg A_1+\urg E_7,\
  2\urg A_1+\urg D_6$\\
&$\urg A_7.1$&$\pcl E_7,\ \urg D_8,\ \urg A_1+\urg A_7$\\
\para&$\urg A_7.2$&$\urg E_8,\ \urg D_8,\ \urg A_8$\\
\para&$\urg D_7$&$\urg E_8,\ \urg D_8$\\
\para&$\urg E_7$&$\urg E_8,\ \urg A_1+\urg E_7$\\
\para&$\urg A_1+\urg A_6$&$\urg E_8,\ \urg A_1+\urg A_7,\ \urg A_8,\ \urg A_1+\urg E_7$\\
\para&$\urg A_1+\urg E_6$&$\urg E_8,\ \urg A_1+\urg E_7,\ \urg A_2+\urg E_6$\\
\para&$\urg A_3+\urg A_4$&$\urg E_8,\ \urg D_8,\ \urg A_8,\
  \urg A_3+\urg D_5,\ 2\urg A_4$\\
\para&$\urg A_2+\urg D_5$&$\urg E_8,\ \urg D_8,\ \urg A_2+\urg E_6,\
  \urg A_3+\urg D_5$\\
\para&$\urg A_1+\urg A_2+\urg A_4$&$\urg E_8,\ \urg A_1+\urg E_7,\
  \urg A_2+\urg E_6,\ \urg A_1+\urg A_7,\ \urg A_1+\urg A_2+\urg A_5,\ 2\urg A_4$\\
\midrule
&$2\urg A_1+2\urg A_3$&$2\urg A_1+\urg D_6,\ \urg A_1+\urg E_7,\
  \urg A_3+\urg D_5$\\
&$8\urg A_1$&$4\urg A_1+\urg D_4$\\
&$4\urg A_1+\urg D_4$&$2\urg D_4,\ 2\urg A_1+\urg D_6$\\
&$2\urg A_1+\urg D_6$&$\urg D_8,\ \urg A_1+\urg E_7$\\
&$4\urg A_2$&$\urg A_2+\urg E_6$\\
&$2\urg D_4$&$\urg D_8$\\
&$\urg A_1+\urg A_2+\urg A_5$&$\pcl E_8,\ \urg A_1+\urg E_7,\ \urg A_2+\urg E_6$\\
&$\urg A_3+\urg D_5$&$\pcl E_8,\ \urg D_8$\\
&$\urg A_1+\urg A_7$&$\pcl E_8,\ \urg A_1+\urg E_7$\\
&$2\urg A_4$&$\pcl E_8$\\
&$\urg A_2+\urg E_6$&$\pcl E_8$\\
&$\urg A_8$&$\pcl E_8$\\
&$\urg A_1+\urg E_7$&$\pcl E_8$\\
&$\urg D_8$&$\pcl E_8$\\
\bottomrule
\end{tabular}
\end{table}

\clearpage
\bibliographystyle{abbrv}
\def\noopsort#1{}\def\cprime{$'$}

\end{document}